\newcommand{\es}{\varnothing}
\title{\sc {On the strong chromatic index and maximum
induced matching
of tree-cographs, permutation graphs and chordal
bipartite graphs}}
\author{
 Ton~Kloks\inst{1}
\and
 Chin-Ting~Ung$^{\star,}$\inst{1}
\and
 Yue-Li~Wang\thanks{All correspondence should be
addressed to Professor Yue-Li Wang, Department of Information
Management, National Taiwan University of Science and Technology,
43, Section 4, Kee-Lung Road, Taipei, Taiwan 10607 (e-mail: ylwang@cs.ntust.edu.tw). }$^,$\inst{2}
}
\institute{
 Department of Computer Science\\
 National Tsing Hua University,
 No.~101, Sec.~2, Kuang Fu Rd., Hsinchu, Taiwan\\
 {\tt spoon@cs.nthu.edu.tw, wonderboy0915@gmail.com}
\and
 Department of Information Management\\
 National Taiwan University of Science and Technology\\
 No.~43, Sec.~4, Keelung Rd., Taipei, 106, Taiwan\\
 {\tt ylwang@cs.ntust.edu.tw}
}
\begin{document}

\maketitle

\begin{abstract}
We show that there exist linear-time algorithms that compute the
strong chromatic index and a maximum induced matching of
tree-cographs when the decomposition tree is a part of the input. We
also show that there exist efficient algorithms for the strong
chromatic index of (bipartite) permutation graphs and of chordal
bipartite graphs.
\end{abstract}

\setcounter{page}{0}
\newpage
\section{Introduction}
%%%%%%%%%%%%%%%%%%%%%%

\begin{definition}[\cite{kn:cameron}]
An {\em induced matching\/} in a graph $G$ is a set of edges, no two
of which meet a common vertex or are joined by an edge of $G$. The
size of an induced matching is the number of edges in the induced
matching. An induced matching is maximum if its size is largest
among all possible induced matchings.
\end{definition}

\begin{definition}[\cite{kn:fouquet}]
Let $G=(V,E)$ be a graph. A {\em strong edge coloring\/} of $G$ is a
proper edge coloring such that no edge is adjacent to two edges of
the same color. A strong edge-coloring of a graph is a partition of
its edges into induced matchings. The {\em strong chromatic index\/}
of $G$ is the minimal integer $k$ such that $G$ has a strong edge
coloring with $k$ colors. We denote the strong chromatic index of
$G$ by $s\chi^{\prime}(G)$.
\end{definition}

Equivalently, a strong edge coloring of $G$ is a vertex coloring of
$L(G)^2$, the square of the linegraph of $G$. The strong chromatic
index problem can be solved in polynomial time for chordal
graphs~\cite{kn:cameron} and for partial
k-trees~\cite{kn:salavatipour}, and can be solved in linear time for
trees~\cite{kn:faudree}. However, it is NP-complete to find the
strong chromatic index for general graphs
\cite{kn:cameron,kn:mahdian,kn:stockmeyer} or even for planar
bipartite graphs~\cite{kn:hocquard}. In this paper, we show that
there exist linear-time algorithms that compute the strong chromatic
index and a maximum induced matching of tree-cographs when the
decomposition tree is a part of the input. We also show that there
exist efficient algorithms for the strong chromatic index of
(bipartite) permutation graphs and of chordal bipartite graphs.

The class of tree-cographs was introduced by
Tinhofer in~\cite{kn:tinhofer}.

\begin{definition}
Tree-cographs are defined recursively by the following rules.
\begin{enumerate}[\rm 1.]
\item Every tree is a tree-cograph.
\item If $G$ is a tree-cograph then also the
complement $\Bar{G}$ of $G$ is a tree-cograph.
\item For $k \geq 2$, if $G_1,\ldots,G_k$ are connected tree-cographs
then also the disjoint union is a tree-cograph.
\end{enumerate}
\end{definition}

Let $G$ be a tree-cograph. A decomposition tree for $G$ consists
of a rooted binary tree $T$ in which each
internal node, including the root,
is labeled as a join node $\otimes$ or a union node $\oplus$.
The leaves of $T$ are labeled by trees or complements of trees.
It is easy to see that a decomposition tree for a tree-cograph $G$
can be obtained in $O(n^3)$ time.

\section{The strong chromatic index of tree-cographs}
%%%%%%%%%%%%%%%%%%%%%%%%%%%%%%%%%%%%%%%%%%%%%%%%%%%%%

The {\em linegraph} $L(G)$ of a graph $G$ is the intersection graph
of the edges of $G$~\cite{kn:beineke}. It is well-known that, when
$G$ is a tree then the linegraph $L(G)$ of $G$ is a claw-free
blockgraph~\cite{kn:harary}. A graph is {\em chordal} if it has no
induced cycles of length more than three~\cite{kn:dirac}. Notice
that blockgraphs are chordal.

\medskip

A vertex $x$ in a graph $G$ is {\em simplicial} if its neighborhood
$N(x)$ induces a clique in $G$. Chordal graphs are characterized by
the property of having a {\em perfect elimination ordering}, which
is an ordering $[v_1,\ldots,v_n]$ of the vertices of $G$ such that
$v_i$ is simplicial in the graph induced by $\{v_i,\ldots,v_n\}$. A
perfect elimination ordering of a chordal graph can be computed in
linear time~\cite{kn:rose}. This implies that chordal graphs have at
most $n$ maximal cliques, and the clique number can be computed in
linear time, where the {\em clique number} of $G$, denoted by
$\omega(G)$, is the number of vertices in a maximum clique of $G$.

\begin{theorem}[\cite{kn:cameron}]
If $G$ is a chordal graph then $L(G)^2$ is also chordal.
\end{theorem}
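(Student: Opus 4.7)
The plan is to realise $L(G)^2$ as an intersection graph of subtrees of a tree, and to invoke the classical characterisation (Buneman, Gavril, Walter) that the chordal graphs are exactly the graphs arising in this way. The tree I will use is a clique tree of $G$, which exists because $G$ is chordal.

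Let $T$ be a clique tree of $G$: the nodes of $T$ are the maximal cliques of $G$ and, for every vertex $v\in V(G)$, the set $T_v$ of maximal cliques containing $v$ induces a subtree of $T$. For each edge $e=uv$ of $G$, define $T_e := T_u \cup T_v$. Since $uv\in E(G)$, any maximal clique of $G$ containing the pair $\{u,v\}$ lies in $T_u\cap T_v$, so $T_u$ and $T_v$ are intersecting subtrees of $T$ and their union $T_e$ is connected in $T$, hence itself a subtree.

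The content of the proof is then the equivalence
\[
  e_1 \sim e_2 \ \text{in}\ L(G)^2 \quad\iff\quad T_{e_1}\cap T_{e_2}\neq\es,
\]
which I would obtain by unwinding definitions. Two edges $e_1,e_2$ are adjacent in $L(G)^2$ iff they lie at distance at most two in $L(G)$, iff there are endpoints $p\in e_1$ and $q\in e_2$ with $p=q$ or $pq\in E(G)$. The defining property of the clique tree turns this into the existence of a maximal clique $\alpha$ of $G$ with $p,q\in \alpha$, and such an $\alpha$ lies simultaneously in $T_{e_1}$ and $T_{e_2}$; conversely, any common node of $T_{e_1}$ and $T_{e_2}$ is a maximal clique containing an endpoint of each of $e_1$ and $e_2$, forcing the required relation $p=q$ or $pq\in E(G)$. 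Hence $L(G)^2$ is the intersection graph of the family $\{T_e:e\in E(G)\}$ of subtrees of $T$, and is therefore chordal.

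The only real obstacle is conceptual: the paper has so far used only perfect elimination orderings, so I would first need to invoke (or briefly develop) the clique-tree representation of chordal graphs, which is classical. Once that apparatus is in place, the argument reduces to the short unwinding above and no delicate case analysis is required; a purely PEO-based proof is also possible (ordering edges lexicographically by the PEO positions of their larger and smaller endpoints), but it is longer and less transparent.
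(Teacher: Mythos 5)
Your argument is correct: the clique-tree construction, the observation that $T_e=T_u\cup T_v$ is a subtree because $T_u\cap T_v\neq\varnothing$ for an edge $uv$, and the equivalence between adjacency in $L(G)^2$ and intersection of the subtrees $T_{e_1},T_{e_2}$ all check out. Note that the paper itself states this theorem only as a citation of Cameron and gives no proof; your subtree-intersection argument is essentially Cameron's original one, so there is nothing to compare beyond saying you have correctly reconstructed the standard proof.
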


\begin{theorem}[\cite{kn:cameron3}]
\label{weakly chordal}
Let $k \in \mathbb{N}$ and let $k \geq 4$.
Let $G$ be a graph and assume that $G$ has no induced cycles
of length at least $k$. Then $L(G)^2$ has no
induced cycles of length at least $k$.
\end{theorem}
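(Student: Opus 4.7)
The plan is to prove the contrapositive. Assume $L(G)^2$ contains an induced cycle $C = e_1 e_2 \cdots e_\ell$ of length $\ell \geq k \geq 4$, whose vertices are edges of $G$. By the definition of the square of a graph, consecutive edges satisfy $d_{L(G)}(e_i,e_{i+1}) \leq 2$, and by the inducedness of $C$ together with $\ell \geq 4$, every pair of non-consecutive edges $e_i, e_j$ satisfies $d_{L(G)}(e_i, e_j) \geq 3$.

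I would first classify each transition. Call the index $i$ of \emph{Type A} if $e_i \cap e_{i+1} = \{v_i\}$, and of \emph{Type B} otherwise; in the latter case a distance-$2$ witness is an edge $g_i = \{x_i, y_i\}$ of $G$ with $x_i \in e_i$ and $y_i \in e_{i+1}$. Next I would assemble these transitions into a closed walk $W$ in $G$: for each $i$ traverse $e_i$ from an entry vertex $u'_i$ to an exit vertex $u_i$, followed either (Type B) by $g_i$ from $u_i = x_i$ to $u'_{i+1} = y_i$, or (Type A) directly into $e_{i+1}$ at $u'_{i+1} = u_i = v_i$. The walk has length $\ell + b$, where $b$ is the number of Type-B transitions.

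The heart of the argument is to show that (i) the entry/exit vertices can be chosen so that $u'_i \neq u_i$ for every $i$, and (ii) the resulting walk $W$ is an induced cycle in $G$. Both reduce to the same kind of case analysis: any forced coincidence $u'_i = u_i$, any repeated vertex of $W$, or any chord of $W$ would exhibit either a common endpoint or a common neighbour-edge between two non-consecutive members of $C$, placing them at distance $\leq 2$ in $L(G)$ and contradicting the inducedness of $C$. The all-Type-A special case is immediate: the walk collapses to the cycle $v_1 v_2 \cdots v_\ell$ of length exactly $\ell$, and the chord analysis applies verbatim.

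The main obstacle is step (i): a single Type-B transition may admit several candidate edges $g_i$, and each choice determines $u_i$ and $u'_{i+1}$, so coordinating these choices globally so that $u'_i \neq u_i$ at every index is delicate. When two adjacent transitions conspire to force the same endpoint of some $e_i$, one must argue that this already yields a common distance-$\leq 2$ witness between $e_{i-1}$ and $e_{i+1}$ in $L(G)$, producing the desired contradiction and completing the construction of an induced cycle of length $\ell + b \geq \ell \geq k$ in $G$.
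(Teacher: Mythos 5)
A preliminary remark: the paper does not prove this theorem at all --- it is quoted from Cameron, Sritharan and Tang \cite{kn:cameron3} --- so the only comparison available is with the strategy of that source, and your contrapositive plan (turn a long chordless cycle of $L(G)^2$ into a long chordless cycle of $G$ via a walk alternating traversed edges $e_i$ and connector edges $g_i$) is indeed the same strategy. The problem is that your sketch omits, or gets wrong, exactly the places where the work lies.

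First, your proposed resolution of step (i) is false as stated. Suppose both transitions at $e_i=\{c,d\}$ are of Type B and $d$ has no neighbour in $e_{i-1}\cup e_{i+1}$; then every connector from $e_{i-1}$ lands on $c$ and every connector to $e_{i+1}$ leaves from $c$, so $u'_i=u_i=c$ is forced. But this only yields the path $e_{i-1}\sim g_{i-1}\sim g_i\sim e_{i+1}$ in $L(G)$, i.e.\ distance $3$, which does \emph{not} contradict the non-adjacency of $e_{i-1}$ and $e_{i+1}$ in $L(G)^2$. (The configuration is realizable: attach a pendant vertex $d$ to a vertex $c$ on a long induced cycle of $G$.) The correct move is to let $W$ pass through $c$ and drop the edge $e_i$ from the walk, and then redo the length count (each skipped $e_i$ is flanked by two Type-B connectors, so the total stays at least $\ell$); your claimed contradiction does not exist. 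Second, and more seriously, your inducedness check in step (ii) only exploits that non-consecutive $e_i,e_j$ lie at $L(G)$-distance at least $3$; this says nothing about edges of $G$ joining endpoints of the \emph{same} or of \emph{consecutive} members of $C$. A triangle $\{u,v,w\}$ with $e_i=\{u,v\}$ and $e_{i+1}=\{v,w\}$, or a second connecting edge between disjoint consecutive $e_i$ and $e_{i+1}$ (e.g.\ both $\{b,c\}$ and $\{a,d\}$ present between $e_i=\{a,b\}$ and $e_{i+1}=\{c,d\}$), produces a chord of $W$ that no hypothesis of the theorem excludes. Eliminating such chords requires local rerouting and shortcutting, after which the lower bound of $k$ on the resulting cycle length must be re-established. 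That local case analysis is the actual substance of the Cameron--Sritharan--Tang proof and is entirely absent from your proposal, so what you have is a correct plan of attack rather than a proof.
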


\begin{lemma}
Tree-cographs have no induced cycles of length more than four.
\end{lemma}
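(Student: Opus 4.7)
The plan is to induct on the decomposition tree of the given tree-cograph. For the base cases the leaves are either trees or complements of trees. Trees are acyclic so there is nothing to check. For a complement $\overline{T}$ of a tree $T$, I would translate an induced $C_k$ (with $k \geq 5$) in $\overline{T}$ into an induced copy of $\overline{C_k}$ sitting inside $T$, and then rule this latter subgraph out: $\overline{C_5}$ equals $C_5$, while for every $k \geq 6$ the vertices $v_1, v_3, v_5$ of $C_k$ are pairwise non-adjacent, so $\overline{C_k}$ contains a triangle. Either conclusion plants a cycle inside the tree $T$, which is impossible.

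For the inductive step, a union node $\oplus$ is immediate because any induced cycle is connected and therefore lies inside a single child, so the induction hypothesis applies. The substantive case is a join node $G = G_1 \otimes G_2$. I would suppose for contradiction that $G$ contains an induced cycle $C$ of length $k \geq 5$, and partition its vertices as $A = V(C) \cap V(G_1)$ and $B = V(C) \cap V(G_2)$. If both $A$ and $B$ are nonempty, then every $a \in A$ is join-adjacent to every vertex of $B$; since $a$ has exactly two neighbors in the induced cycle $C$, this forces $|B| \leq 2$, and symmetrically $|A| \leq 2$, so $k = |A| + |B| \leq 4$, contradicting $k \geq 5$. Hence $C$ is entirely contained in one of $G_1, G_2$, where the induction hypothesis excludes it.

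The main obstacle is the base case $\overline{T}$: one has to inspect the structure of $\overline{C_k}$ and treat $k = 5$ and $k \geq 6$ separately, using that trees are both acyclic and triangle-free. By contrast the union step is trivial, and the join step reduces to a clean degree count from the definition of an induced cycle.
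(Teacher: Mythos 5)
Your proof is correct and follows essentially the same route as the paper: induction over the decomposition tree, with the union case handled by connectivity of a cycle and the join case by the same degree count (the paper phrases it as one side containing more than two cycle vertices, forcing a vertex of degree at least three on $C$). The only difference is that you spell out the base case for $\overline{T}$ via $\overline{C_5}=C_5$ and the triangle $v_1,v_3,v_5$ in $\overline{C_k}$ for $k\geq 6$, where the paper simply invokes bipartiteness of trees.
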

\begin{proof} Let $G$ be a tree-cograph. First observe that trees are
bipartite. It follows that complements of trees have no induced
cycles of length more than four.

We prove the claim by induction on the depth of a decomposition tree
for $G$. If $G$ is the union of two tree-cographs $G_1$ and $G_2$
then the claim follows by induction since any induced cycle is
contained in one of $G_1$ and $G_2$. Assume $G$ is the join of two
tree-cographs $G_1$ and $G_2$. Assume that $G$ has an induced cycle
$C$ of length at least five. We may assume that $C$ has at least one
vertex in each of $G_1$ and $G_2$. As one of $G_1$ and $G_2$ has
more than two vertices of $C$, $C$ has a vertex of degree
at least three, which is a contradiction. \qed\end{proof}

\begin{lemma}
\label{complement of tree}
Let $T$ be a tree. Then $L(\Bar{T})^2$ is a clique.
\end{lemma}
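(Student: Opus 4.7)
The plan is to unpack the definition: $L(\bar{T})^2$ is a clique exactly when any two edges of $\bar{T}$ lie within distance two of each other in $L(\bar{T})$. I would first dispose of the trivial cases: if $T$ has at most three vertices, then $\bar{T}$ has at most one edge and the claim is vacuous, so I may assume $T$ has at least four vertices.

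For the main case, I would pick two arbitrary edges $e_1 = \{a,b\}$ and $e_2 = \{c,d\}$ of $\bar{T}$. If they share an endpoint, they are already adjacent in $L(\bar{T})$ and hence in its square, so the interesting case is when $a, b, c, d$ are four distinct vertices. The key step is to examine the four pairs $\{a,c\}, \{c,b\}, \{b,d\}, \{d,a\}$: together with $e_1$ and $e_2$ erased, these form a 4-cycle on $\{a,b,c,d\}$. Since $T$ is acyclic, not all four of these pairs can belong to $T$; hence at least one of them, say $\{a,c\}$, is an edge of $\bar{T}$. This edge shares endpoint $a$ with $e_1$ and endpoint $c$ with $e_2$, so it witnesses that $e_1$ and $e_2$ are at distance exactly two in $L(\bar{T})$, making them adjacent in $L(\bar{T})^2$.

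I do not foresee any genuine obstacle. The only subtlety worth flagging is that one must actually use the acyclicity of $T$ in the cycle $a$–$c$–$b$–$d$–$a$ rather than, say, the triangle $a$–$c$–$b$; a triangle would not be excluded by $T$ being bipartite, but in fact we need only that $T$ has no cycle at all, which rules out the 4-cycle immediately.
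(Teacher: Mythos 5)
Your proof is correct and follows essentially the same route as the paper: handle shared endpoints via adjacency in $L(\Bar{T})$ itself, and otherwise observe that if all four cross pairs were edges of $T$ then $T$ would contain a 4-cycle, so some cross pair is an edge of $\Bar{T}$ witnessing distance two in $L(\Bar{T})$. Your explicit identification of the 4-cycle $a$--$c$--$b$--$d$--$a$ and the remark on acyclicity just make the paper's one-line justification more transparent.
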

\begin{proof} Consider two non-edges $\{a,b\}$ and $\{p,q\}$ of $T$. If the
non-edges share an endpoint then they are adjacent in $L(\Bar{T})^2$
since they are already adjacent in $L(\Bar{T})$. Otherwise, since
$T$ is a tree, at least one pair of $\{a,p\}$, $\{a,q\}$, $\{b,p\}$
and $\{b,q\}$ is a non-edge in $T$, otherwise $T$ has a 4-cycle. By
definition, $\{a,b\}$ and $\{p,q\}$ are adjacent in $L(\Bar{T})^2$.
\qed\end{proof}

\medskip

If $G$ is the union of two tree-cographs $G_1$ and $G_2$ then
\[\omega(L(G)^2)=\max\{\omega(L(G_1)^2),\omega(L(G_2)^2)\}.\]
The following lemma deals with the join of two tree-cographs.

\begin{lemma}
\label{join}
Let $P$ and $Q$ be tree-cographs and let $G$ be the join of
$P$ and $Q$. Let $X$ be the set of edges that have one endpoint
in $P$ and one endpoint in $Q$. Then
\begin{enumerate}[\rm (a)]
\item $X$ forms a clique in $L(G)^2$,
\item every edge of $X$ is adjacent
in $L(G)^2$ to every edge of $P$ and to every edge of $Q$, and
\item every edge of $P$ is adjacent in $L(G)^2$ to every edge of $Q$.
\end{enumerate}
\end{lemma}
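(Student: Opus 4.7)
The plan is to reduce all three claims to a single reformulation of adjacency in $L(G)^2$: two edges $e, e'$ of $G$ are adjacent in $L(G)^2$ if and only if either they share a vertex in $G$, or some endpoint of $e$ is joined by an edge of $G$ to some endpoint of $e'$. Once this is in hand, every part becomes a short case check against the defining property of the join, namely that every vertex of $P$ is adjacent in $G$ to every vertex of $Q$.

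For part (a) I would take two edges $\{p_1,q_1\}$ and $\{p_2,q_2\}$ in $X$ with $p_i \in P$ and $q_i \in Q$. If these two edges share a vertex, adjacency in $L(G)^2$ is immediate. Otherwise $p_1 \neq p_2$ and $q_1 \neq q_2$, so the four endpoints are distinct; then $\{p_1,q_2\}$ is an edge of $G$ by the definition of the join, and this edge shares $p_1$ with the first and $q_2$ with the second, so the two edges of $X$ are at distance at most two in $L(G)$ and hence adjacent in $L(G)^2$.

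For part (b) I would fix $\{p,q\} \in X$ with $p \in P$, $q \in Q$, and an edge $\{a,b\}$ of $P$. If $p \in \{a,b\}$ the two edges already share a vertex; otherwise $a \in P$ and $q \in Q$ are joined in $G$, and the edge $\{a,q\}$ witnesses adjacency in $L(G)^2$. The argument for an edge in $Q$ is completely symmetric. For part (c), given $\{a,b\}$ in $P$ and $\{c,d\}$ in $Q$, the edge $\{a,c\}$ is present in $G$ by the join, so the two edges are at distance at most two in $L(G)$ and hence adjacent in $L(G)^2$.

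No genuine obstacle is expected: the argument is a direct unpacking of the definition of $L(G)^2$ combined with the completeness of the bipartition between $P$ and $Q$. The only mild care is in part (a), where one must treat the two subcases (edges sharing an endpoint versus edges with four distinct endpoints) separately, since only in the latter case does one actually invoke one of the cross-edges guaranteed by the join.
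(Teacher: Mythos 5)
Your proof is correct and takes the same route as the paper, which dismisses the lemma as ``an immediate consequence of the definitions''; your case analysis is exactly the unpacking of those definitions (adjacency in $L(G)^2$ as sharing an endpoint or having endpoints joined by an edge, combined with the completeness of the join between $P$ and $Q$).
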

\begin{proof} This is an immediate consequence of the definitions. \qed\end{proof}

%\medskip

For $k \geq 3$, a {\em $k$-sun} is a graph which consists of a
clique with $k$ vertices and an independent set with $k$ vertices.
There exist orderings $c_1,\ldots,c_k$ and $s_1,\ldots,s_k$ of the
vertices in the clique and independent set such that each $s_i$ is
adjacent to $c_i$ and to $c_{i+1}$ for $i=1,\ldots,k-1$ and such
that $s_k$ is adjacent to $c_k$ and $c_1$. A graph is {\em strongly
chordal} if it is chordal and has no $k$-sun, for $k \geq
3$~\cite{kn:farber}.

\begin{figure}[htb]
\setlength{\unitlength}{1.8pt}
\begin{center}
\begin{picture}(170,40)
\thicklines
% sun
\put(0,20){\circle*{2.0}}
\put(10,10){\circle*{2.0}}
\put(20,0){\circle*{2.0}}
\put(20,30){\circle*{2.0}}
\put(30,10){\circle*{2.0}}
\put(40,20){\circle*{2.0}}

\put(0,20){\line(1,-1){20}}
\put(0,20){\line(2,1){20}}
\put(10,10){\line(1,0){20}}
\put(20,0){\line(1,1){20}}
\put(40,20){\line(-2,1){20}}
\put(10,10){\line(1,2){10}}
\put(30,10){\line(-1,2){10}}

% gem
\put(67,20){\circle*{2.0}}
\put(77,10){\circle*{2.0}}
\put(87,30){\circle*{2.0}}
\put(97,10){\circle*{2.0}}
\put(107,20){\circle*{2.0}}

\put(67,20){\line(1,-1){10}}
\put(67,20){\line(2,1){20}}
\put(77,10){\line(1,0){20}}
\put(77,10){\line(1,2){10}}
\put(107,20){\line(-2,1){20}}
\put(97,10){\line(-1,2){10}}
\put(97,10){\line(1,1){10}}

% claw
\put(130,5){\circle*{2.0}}
\put(150,5){\circle*{2.0}}
\put(150,25){\circle*{2.0}}
\put(170,5){\circle*{2.0}}
\put(150,25){\line(-1,-1){20}}
\put(150,25){\line(0,-1){20}}
\put(150,25){\line(1,-1){20}}
\end{picture}
\caption{A $3$-sun, a gem and a claw}
\label{3-sun}
\end{center}
\end{figure}
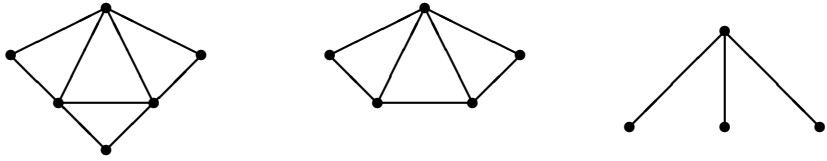

\begin{lemma}
\label{strongly chordal}
Let $T$ be a tree. Then $L(T)^2$ is strongly chordal.
\end{lemma}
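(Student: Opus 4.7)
The plan is to prove strong chordality in two steps. First, $L(T)^2$ is chordal, which is immediate from the Cameron theorem quoted above since every tree is chordal. Second, I will show that $L(T)^2$ contains no induced $k$-sun for any $k\ge 3$. Throughout I use the adjacency rule that two edges $e,f$ of $T$ are adjacent in $L(T)^2$ iff they share a vertex of $T$ or there is an edge of $T$ with one endpoint in $e$ and one in $f$.

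The key structural ingredient is a classification of cliques of $L(T)^2$: if $C$ is a clique of size at least two, then either (a) all edges of $C$ share a common vertex of $T$, or (b) there is an edge $\{u,v\}\in E(T)$ such that every edge of $C$ is incident to $u$ or to $v$. If every two edges of $C$ share a vertex, they share a common vertex (otherwise three pairwise intersecting distinct edges would form a triangle in $T$, a cycle), giving case (a). Otherwise, pick disjoint $c_1,c_2\in C$; they must be joined by an edge $\{u,v\}\in E(T)$ with $u\in c_1$ and $v\in c_2$, and any further $c\in C$ with no endpoint in $\{u,v\}$ would need to be adjacent to both $c_1$ and $c_2$, forcing two internally vertex-disjoint paths in $T$ between the endpoints of $\{u,v\}$, hence a cycle---contradiction.

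Then I assume $L(T)^2$ contains a $k$-sun with clique $\{c_1,\dots,c_k\}$ and independent set $\{s_1,\dots,s_k\}$, where $s_i\sim c_i,c_{i+1}$ and $s_i\not\sim c_j$ otherwise, and derive a contradiction using the classification. In case (a), write $c_i=\{v,x_i\}$; since $v\in s_i$ would make $s_i$ adjacent to every $c_j$ (contradicting $s_i\not\sim c_{i+2}$), each $s_i$ lies in a single component of $T-v$, and a short analysis forces $s_i$ to contain $x_i$ or $x_{i+1}$; but then the edge $\{v,x_i\}$ or $\{v,x_{i+1}\}$ witnesses $s_i\sim c_j$ for every $j$, contradicting $s_i\not\sim c_{i+2}$ again. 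Case (b) is handled by a parallel but bulkier analysis of where each $s_i$ can sit in the subtrees of $T$ attached at $u$ or $v$. The main obstacle is precisely this bookkeeping in case (b), since a $c_i$ can be incident to $u$, to $v$, or be the edge $\{u,v\}$ itself, and the $s_i$'s can be distributed among several hanging subtrees; nevertheless the underlying principle is the same in every subcase, namely that realizing the $k$-sun adjacency pattern with edges of a tree forces two internally disjoint paths between some pair of vertices in $T$, contradicting acyclicity.
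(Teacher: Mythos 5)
Your overall strategy (chordality via Cameron's theorem, then excluding induced $k$-suns by a direct analysis of edges of $T$) is genuinely different from the paper's, which disposes of the lemma in two lines: $L(T)$ is a block graph, block graphs are strongly chordal, and by Lubiw's theorem every power of a strongly chordal graph is strongly chordal. Your classification of the cliques of $L(T)^2$ into type (a) (all edges through one vertex) and type (b) (all edges meeting an edge $\{u,v\}$ of $T$) is correct and would be a sound basis for such an elementary proof. But as written the proof is not complete, and the missing part is exactly the part you flag as ``the main obstacle'': case (b) is never carried out. It is not a routine copy of case (a). There one must contend with the split of the sun's clique into edges at $u$, edges at $v$, and possibly $\{u,v\}$ itself, and with the fact that an $s_i$ attached to $u$ by an edge of $T$ is adjacent to \emph{every} clique edge through $u$ but possibly to \emph{no} clique edge through $v$ --- a configuration that is locally consistent with the sun pattern and is only refuted globally. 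One workable route: no $s_i$ may contain or be joined by an edge to $u$ or $v$ unless its clique-neighbourhood is exactly $\{c_j : u\in c_j\}$ or exactly $\{c_j: v\in c_j\}$; a short component argument in $T-\{u,v\}$ shows every $s_i$ is of this kind; but then each of the $k\ge 3$ distinct consecutive pairs $\{c_i,c_{i+1}\}$ would have to equal one of only two candidate sets, which is impossible. Until something of this sort is written out, the lemma is not proved.

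A second, smaller defect is in case (a): the intermediate claim that $s_i$ is ``forced to contain $x_i$ or $x_{i+1}$'' is false --- $s_i$ may instead be joined to $x_i$ and to $x_{i+1}$ by edges of $T$ while containing neither. The correct short argument is that $s_i\sim c_i$ (with an edge from $s_i$ to $v$ already excluded) places $s_i$ in the component of $T-v$ containing $x_i$, while $s_i\sim c_{i+1}$ places it in the component containing $x_{i+1}$; since $x_i$ and $x_{i+1}$ are distinct neighbours of $v$, these components are distinct, so no such $s_i$ exists. Both defects are repairable, but if you want the elementary proof you must actually do case (b); otherwise the block-graph/Lubiw citation used in the paper is the economical choice.
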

\begin{proof} When $T$ is a tree then $L(T)$ is a blockgraph. Obviously,
blockgraphs are strongly chordal. Lubiw proves in~\cite{kn:lubiw}
that all powers of strongly chordal graphs are strongly chordal.
\qed\end{proof}

We strengthen the result of Lemma~\ref{strongly chordal} as follows.
{\em Ptolemaic graphs} are graphs that are both distance hereditary
and chordal~\cite{kn:howorka}. Ptolemaic graphs are gem-free chordal
graphs. The following theorem characterizes ptolemaic graphs.

\begin{theorem}[\cite{kn:howorka}]
A connected graph is ptolemaic if and only if
for all pairs of maximal cliques $C_1$ and $C_2$ with
$C_1 \cap C_2 \neq \es$, the intersection $C_1 \cap C_2$
separates $C_1 \setminus C_2$ from $C_2 \setminus C_1$.
\end{theorem}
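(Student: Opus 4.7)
\medskip

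My plan is to prove both directions, relying on the equivalent characterization recalled just before the theorem: a graph is ptolemaic if and only if it is chordal and gem-free. Throughout, write $S = C_1 \cap C_2$ for the proposed separator.

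For the direction $(\Leftarrow)$, I would proceed by contrapositive: given the separator property, derive both chordality and gem-freeness. If $G$ contained an induced cycle $v_1 v_2 \cdots v_n v_1$ with $n \geq 4$, I would pick maximal cliques $C_1 \supseteq \{v_1, v_2\}$ and $C_2 \supseteq \{v_2, v_3\}$; the induced non-edges on the cycle give $v_1 \in C_1 \setminus C_2$ and $v_3 \in C_2 \setminus C_1$, and they force that no $v_i$ with $i \geq 4$ belongs to $C_1 \cup C_2$ (any such $v_i$ would need to be adjacent to $v_1$ and to $v_2$, or to $v_2$ and to $v_3$, which the cycle forbids), so the path $v_1, v_n, v_{n-1}, \ldots, v_3$ avoids $S$ and contradicts the hypothesis. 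If $G$ contained a gem on a $P_4$ $a, b, c, d$ with universal apex $z$, I would take maximal cliques $C_1 \supseteq \{z, a, b\}$ and $C_2 \supseteq \{z, c, d\}$; the non-edges $ac, bd, ad$ yield $a \in C_1 \setminus C_2$, $d \in C_2 \setminus C_1$, and $b, c \notin S$, so the path $a, b, c, d$ once more violates the separator property.

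For the direction $(\Rightarrow)$, I would assume $G$ is ptolemaic but some pair of maximal cliques $C_1, C_2$ with $S \neq \es$ fails the separator condition, and choose a shortest path $P = x_0, x_1, \ldots, x_k$ in $G - S$ from $C_1 \setminus C_2$ to $C_2 \setminus C_1$, together with any $w \in S$. A short argument using the minimality of $P$ and the maximality of the cliques rules out any interior $x_i$ lying in $C_1 \cup C_2$ (otherwise $P$ could be strictly shortened). Next, the cycle $w, x_0, x_1, \ldots, x_k, w$ of length $k + 2$ cannot admit a chord $x_i x_j$ with $|i - j| \geq 2$ by minimality of $P$, so every chord must involve $w$; chordality applied inductively to the sub-cycles $w, x_0, \ldots, x_i, w$ then forces $w \sim x_i$ for every $i$. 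When $k \geq 3$ this already produces an induced gem on $\{w, x_0, x_1, x_2, x_3\}$, contradicting gem-freeness.

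The main obstacle will be handling the small cases $k = 1$ and $k = 2$, where the interior of $P$ is too short to supply a $P_4$ by itself. For $k = 2$, maximality of $C_1$ yields some $y_1 \in C_1 \setminus \{x_0, w\}$ with $y_1 \not\sim x_1$; splitting on whether $y_1 \sim x_2$, the set $\{y_1, x_0, x_1, x_2\}$ induces either a $C_4$, contradicting chordality, or a $P_4$ whose vertices together with $w$ induce a gem. For $k = 1$, a symmetric appeal to maximality of $C_2$ also gives $y_2 \in C_2 \setminus \{x_1, w\}$ with $y_2 \not\sim x_0$; a case split on whether $y_1 \sim y_2$ then yields either an induced $C_4$ on $\{y_1, x_0, x_1, y_2\}$ or, together with $w$, an induced gem on $\{y_1, x_0, x_1, y_2, w\}$. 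Verifying in each small case that these local witnesses really lie outside $S$ and that the case split exhausts the possibilities is where the argument becomes delicate.
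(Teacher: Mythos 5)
Your proposal is correct, but note that the paper does not prove this statement at all: it is quoted from Howorka's paper \cite{kn:howorka}, so there is no in-paper argument to compare against. What you supply is a self-contained proof built on the equivalence ``ptolemaic $=$ chordal and gem-free,'' which the paper asserts as background but also does not prove; your argument is therefore only as complete as that characterization. Granting it, both directions check out. In the $(\Leftarrow)$ direction, the witnesses are chosen correctly: for an induced $C_n$ ($n\geq 4$) the only neighbours of $v_2$ on the cycle are $v_1$ and $v_3$, so indeed no $v_i$ with $i\geq 4$ can lie in $C_1\cup C_2$, and for the gem the non-edges $ac$, $ad$, $bd$ put $a,b,c,d$ where you claim. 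In the $(\Rightarrow)$ direction, the shortest-path normalization, the chordality argument forcing $w$ adjacent to every $x_i$, and the small cases all work; the distinctness and membership checks you flag as ``delicate'' do go through (e.g.\ $y_1\notin C_2$ because $y_1\not\sim x_1\in C_2$, so $y_1\neq y_2$ and $y_1\notin S$), and your appeal to ``maximality of the cliques'' for excluding interior vertices from $C_1\cup C_2$ is unnecessary --- minimality of $P$ alone suffices. Incidentally, since the hard direction only ever produces an induced $C_4$ or an induced gem, your argument in fact shows the stronger statement that $C_4$-free, gem-free graphs already have the separator property, which is a tidy way to package it.
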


\begin{lemma}
\label{ptolemaic}
Let $T$ be a tree. Then $L(T)^2$ is ptolemaic.
\end{lemma}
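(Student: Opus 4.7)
The plan is to invoke the separator characterization of ptolemaic graphs stated in the theorem immediately preceding the lemma. Since Lemma~\ref{strongly chordal} already gives that $L(T)^2$ is strongly chordal, and in particular chordal, the only remaining obligation is to verify, for every pair of maximal cliques $C_1,C_2$ of $L(T)^2$ with $C_1\cap C_2\neq\es$, that $C_1\cap C_2$ separates $C_1\setminus C_2$ from $C_2\setminus C_1$ in $L(T)^2$.

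The first step is to describe the maximal cliques of $L(T)^2$ in tree-theoretic terms. For each edge $e=\{u,v\}$ of $T$, let $K_e$ denote the set of all edges of $T$ that are incident with $u$ or with $v$. Two edges $f_1,f_2$ of $T$ are adjacent in $L(T)^2$ iff some endpoint of $f_1$ is at $T$-distance at most $1$ from some endpoint of $f_2$, from which it is immediate that each $K_e$ is a clique. I would then show that every maximal clique of $L(T)^2$ has the form $K_e$: given a clique $C$, either it contains two vertex-disjoint $T$-edges $f_1,f_2$, in which case the acyclicity of $T$ forces a unique bridging edge $e^\ast$ of $T$ joining an endpoint of $f_1$ to an endpoint of $f_2$, and one checks that every further member of $C$ must be incident to an endpoint of $e^\ast$; or else all edges of $C$ pairwise share a vertex, and triangle-freeness of $T$ forces them to share a single common vertex.

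Given two maximal cliques $K_{e_1}$ and $K_{e_2}$ with non-empty intersection, I would case-split on the position of $e_1$ and $e_2$ in $T$: they either share an endpoint, or they are vertex-disjoint but connected by a unique bridging edge of $T$. In each case $K_{e_1}\cap K_{e_2}$ admits an explicit description as the set of $T$-edges at some specific vertex (together possibly with the bridging edge) or as a singleton. The separation itself would then be established by showing that any $L(T)^2$-adjacency between an element of $K_{e_1}\setminus K_{e_2}$ and an element of $K_{e_2}\setminus K_{e_1}$ unfolds, via the adjacency rule, into a short closed walk in $T$, contradicting the fact that $T$ has no cycles.

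The main obstacle I expect is this last separation step, especially the case where $e_1$ and $e_2$ are vertex-disjoint and linked by a single bridging edge of $T$: here $K_{e_1}\cap K_{e_2}$ can be as small as a single vertex of $L(T)^2$, and one must enumerate the possible shortcut adjacencies between the two sides of the partition and convert each of them into a forbidden cycle in $T$. This enumeration is driven by which endpoints of the edges involved the shortcut connects, and it is the technical heart of the argument; the remaining cases reduce to it or to straightforward checks inside a single star of $T$.
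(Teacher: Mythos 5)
Your reduction to Howorka's separator criterion and your description of the maximal cliques of $L(T)^2$ as the sets $K_e$ are both correct, and they match the paper's own argument, which phrases the same thing in terms of the blocks $B_u$ of the block graph $L(T)$ (so that $K_{\{u,v\}}=B_u\cup B_v$). The genuine gap sits exactly where you predict it: the separation step in the case where $e_1$ and $e_2$ are vertex-disjoint and joined by a bridging edge cannot be carried out, because the claim is false there. Take $T=P_6$ with vertices $v_1,\ldots,v_6$ and edges $e_i=\{v_i,v_{i+1}\}$. The maximal cliques $K_{e_2}=\{e_1,e_2,e_3\}$ and $K_{e_4}=\{e_3,e_4,e_5\}$ intersect in the single bridging edge $\{e_3\}$, yet $e_2\in K_{e_2}\setminus K_{e_4}$ and $e_4\in K_{e_4}\setminus K_{e_2}$ are adjacent in $L(T)^2$ (they are at distance two in $L(T)$, via $e_3$), so $\{e_3\}$ separates nothing. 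This shortcut adjacency does not unfold into a closed walk in $T$: it comes from the simple path $v_2v_3v_4v_5$, so no contradiction with acyclicity is available. Indeed $L(P_6)^2=P_5^2$ is exactly a gem, which is chordal but not ptolemaic; the lemma itself is false, so no strategy can close this case.

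For what it is worth, the paper's proof has the same hole: it asserts that every intersecting pair of maximal cliques of $L(T)^2$ has the form $P\cup C$, $Q\cup C$ for blocks $P,Q,C$ of $L(T)$ with $P$ and $Q$ each meeting $C$ in one vertex, i.e., that $e_1$ and $e_2$ always share an endpoint of $T$, and thereby silently skips precisely your bridging-edge case. The statement that is both true and sufficient for everything downstream is Lemma~\ref{strongly chordal}: $L(T)^2$ is strongly chordal, hence chordal and perfect, which is all that is needed to conclude that $L(G)^2$ is perfect for a tree-cograph $G$ and that $s\chi^{\prime}(G)=\omega(L(G)^2)$.
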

\begin{proof} Consider $L(T)$. Let $C$ be a block and let $P$ and $Q$ be two
blocks that each intersects $C$ in one vertex. Since $L(T)$ is
claw-free, the intersections of $P \cap C$ and $Q \cap C$ are
distinct vertices. The intersection of the maximal cliques $P \cup
C$ and $Q \cup C$, which is $C$, separates $P \setminus Q$ and $Q
\setminus P$ in $L(T)^2$. Since all intersecting pairs of maximal
cliques are of this form, this proves the lemma. \qed\end{proof}

\begin{corollary}
\label{char}
Let $G$ be a tree-cograph. Then $L(G)^2$ has a
decomposition tree with internal nodes labeled as join nodes
and union nodes and where the leaves are labeled as
ptolemaic graphs.
\end{corollary}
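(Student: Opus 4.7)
The plan is to induct on the decomposition tree of $G$ and build the required decomposition tree of $L(G)^2$ in parallel. For the base case, each leaf of $G$'s decomposition tree is a tree $T$ or the complement $\Bar{T}$ of a tree. By Lemma~\ref{ptolemaic}, $L(T)^2$ is ptolemaic; by Lemma~\ref{complement of tree}, $L(\Bar{T})^2$ is a clique, hence trivially chordal and gem-free and therefore also ptolemaic. So every leaf transports directly to a ptolemaic leaf in the new tree.

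For a union node with children $G_1$ and $G_2$, I would argue that no edge of $G$ crosses between the two parts, so two edges in different $E(G_i)$ share no vertex and admit no common neighbor in $L(G)$. Hence $L(G)^2 = L(G_1)^2 \oplus L(G_2)^2$, and I place a $\oplus$ node with children the two inductively constructed subtrees.

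For a join node with children $G_1$ and $G_2$, let $X$ be the set of edges with one endpoint in each $G_i$. Lemma~\ref{join} tells us that $X$ forms a clique in $L(G)^2$, that $X$ is completely joined to $E(G_1)$ and to $E(G_2)$, and that $E(G_1)$ is completely joined to $E(G_2)$. I would also verify that the subgraph of $L(G)^2$ induced on $E(G_i)$ is exactly $L(G_i)^2$: any common neighbor in $L(G)$ of two edges in $E(G_i)$ must be an edge whose two endpoints both lie in $V(G_i)$, and such an edge already belongs to $E(G_i)$, so no new length-two path is contributed by $X$ or by $E(G_{3-i})$. Consequently $L(G)^2$ is the join of $L(G_1)^2$, $L(G_2)^2$, and the clique $K_{|X|}$, which I encode as two nested $\otimes$ nodes with $K_{|X|}$ as an additional ptolemaic leaf.

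The only real obstacle is this last verification: that squaring respects the union and join operations at the level of the vertex partition $E(G) = E(G_1) \cup E(G_2) \cup X$, in the sense that no spurious edges inside $L(G)^2[E(G_i)]$ appear via detours through $X$ or the opposite side. The one-line "both endpoints of the intermediary edge must lie in $V(G_i)$" observation handles this, after which assembling the decomposition tree is immediate.
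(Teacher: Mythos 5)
Your proof is correct and is exactly the argument the paper intends: the corollary is stated there without proof, as an immediate consequence of Lemmas~\ref{complement of tree}, \ref{ptolemaic} and \ref{join} applied inductively along the decomposition tree, with cliques (including $K_{|X|}$ and $L(\Bar{T})^2$) admitted as ptolemaic leaves. One small imprecision: your claim that a common $L(G)$-neighbor of two edges of $E(G_i)$ must have both endpoints in $V(G_i)$ fails when the two edges already share a vertex (an $X$-edge through that vertex is a common neighbor), but it does hold for vertex-disjoint pairs, which is the only case that could add an edge inside $L(G)^2[E(G_i)]$, so the verification that $L(G)^2[E(G_i)]=L(G_i)^2$ stands.
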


\medskip

{F}rom Corollary~\ref{char}
it follows that $L(G)^2$ is perfect~\cite{kn:chudnovsky},
that is, $L(G)^2$ has no odd holes or odd antiholes~\cite{kn:lovasz}.
This implies that the chromatic number of $L(G)^2$ is equal
to the clique number. Therefore, to compute the strong
chromatic index of a tree-cograph $G$ it suffices to compute the
clique number of $L(G)^2$.

\begin{theorem}
Let $G$ be a tree-cograph and let $T$ be a decomposition tree
for $G$. There exists a linear-time algorithm that computes
the strong chromatic index of $G$.
\end{theorem}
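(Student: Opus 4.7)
\medskip

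\noindent\textbf{Proof plan.} In view of the discussion preceding the theorem, it suffices to compute $\omega(L(G)^2)$ in linear time, and the plan is to do so by a post-order dynamic program on the decomposition tree $T$ of $G$. At each node $H$ of $T$ I would store a pair: the vertex count $n(H) = |V(H)|$ of the tree-cograph rooted at $H$, and the clique number $\omega(L(H)^2)$. Combining children at an internal node should take constant arithmetic, and each leaf will be handled in time linear in the size of its label, giving a total runtime linear in the input.

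For the leaves, both types are easy. When a leaf is a tree $T'$, I would compute $\omega(L(T')^2)$ via the well-known linear-time formula
\[
\omega(L(T')^2) \;=\; \max_{uv \in E(T')}\bigl(\deg(u)+\deg(v)-1\bigr);
\]
the lower bound is realised by the clique of edges incident to $u$ or $v$, and the matching upper bound follows from a short argument exploiting the fact that in a tree two edges at graph-distance at least three cannot be at $L$-distance two. When a leaf has the form $\overline{T'}$, Lemma~\ref{complement of tree} already tells us $L(\overline{T'})^2$ is a clique, so we simply set $\omega(L(\overline{T'})^2) = \binom{n(T')}{2} - (n(T')-1)$.

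For internal nodes the union case is immediate: if $H = H_1 \oplus H_2$, then $L(H)^2$ is the disjoint union of $L(H_1)^2$ and $L(H_2)^2$, so $\omega(L(H)^2) = \max\bigl(\omega(L(H_1)^2),\omega(L(H_2)^2)\bigr)$. The step I expect to require the most care is the join. If $H = H_1 \otimes H_2$, I would read Lemma~\ref{join}(a)--(c) as identifying $L(H)^2$ as the \emph{complete} join of three graphs, namely $L(H_1)^2$, $L(H_2)^2$, and $K_{n(H_1)\cdot n(H_2)}$ (the clique on the cross-edges $X$), rather than as some weaker adjacency pattern---this is exactly what parts (a), (b) and (c) together encode. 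The clique number of a join being additive then gives
\[
\omega(L(H)^2)\;=\;\omega(L(H_1)^2)+\omega(L(H_2)^2)+n(H_1)\cdot n(H_2),
\]
computable in constant time from the stored values, with $n(H)=n(H_1)+n(H_2)$. Correctness then follows by induction on the decomposition tree, and the linear runtime bound by summing constant-time internal-node work with the linear-time leaf computations.
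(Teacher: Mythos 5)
Your proposal is correct and follows essentially the same route as the paper: reduce to computing $\omega(L(G)^2)$ via perfection, use the degree formula for tree leaves, Lemma~\ref{complement of tree} for complement-of-tree leaves, the maximum for union nodes, and the additive formula $\omega(L(H_1)^2)+\omega(L(H_2)^2)+|X|$ (with $|X|=n(H_1)n(H_2)$) from Lemma~\ref{join} for join nodes. The only difference is cosmetic: you phrase it explicitly as a bottom-up dynamic program carrying $n(H)$ along, which the paper leaves implicit.
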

\begin{proof} First assume that $G=(V,E)$ is a tree. Then the strong chromatic
index of $G$ is
\begin{equation}
\label{form1}
s\chi^{\prime}(G)=\max \;\{\; d(x)+d(y)-1 \;|\; (x,y) \in E\;\}
\end{equation}
where $d(x)$ is the degree of the vertex $x$. To see this
notice that
Formula~(\ref{form1}) gives the clique number of $L(G)^2$.

\smallskip

Assume that $G$ is the complement of a tree.
By Lemma~\ref{complement of tree} the strong chromatic
index is the number of nonedges in $G$, which is
\[s\chi^{\prime}(G)=\binom{n}{2} - (n-1).\]

\smallskip

Assume that $G$ is the union of two tree-cographs $G_1$ and
$G_2$. Then, obviously,
\[s\chi^{\prime}(G)= \max \;\{\; s\chi^{\prime}(G_1),
\;s\chi^{\prime}(G_2)\;\}.\]

\smallskip

Finally, assume that $G$ is the join of two tree-cographs $G_1$
and $G_2$. Let $X$ be the set of edges of $G$ that have
one endpoint in $G_1$ and the other in $G_2$.
Then, by Lemma~\ref{join}, we have
\[s\chi^{\prime}(G) = |X|+s\chi^{\prime}(G_1) + s\chi^{\prime}(G_2).\]

The decomposition tree for $G$ has $O(n)$ nodes. For the trees the
strong chromatic index can be computed in linear time. In all other
cases, the evaluation of $s\chi^{\prime}(G)$ takes constant time. It
follows that this algorithm runs in $O(n)$ time, when a
decomposition tree is a part of the input. \qed\end{proof}

\section{Induced matching in tree-cographs}
%%%%%%%%%%%%%%%%%%%%%%%%%%%%%%%%%%%%%%%%%%%

Consider a strong edge coloring of a tree-cograph $G$. Then each
color class is an induced matching in $G$, which is an independent
set in $L(G)^2$~\cite{kn:cameron}. In this section we show that the
maximal value of an induced matching in $G$ can be computed in
linear time. Again, we assume that a decomposition tree is a part of
the input.

\begin{theorem}
\label{induced matching}
Let $G$ be a tree-cograph and let $T$ be a decomposition
tree for $G$. Then the maximal number of edges in an induced matching
in $G$ can be computed in linear time.
\end{theorem}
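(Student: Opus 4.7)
The plan is a bottom-up dynamic program on the decomposition tree $T$ that computes, at each node, the maximum induced-matching size $\mathrm{im}(\cdot)$ of the associated tree-cograph. Recall from Section~2 that an induced matching of $G$ is exactly an independent set of $L(G)^2$, and that the recursive behaviour of $L(G)^2$ under union and join is controlled by Lemmas~\ref{complement of tree} and~\ref{join}.

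At a leaf of $T$ labeled by a tree $T'$ I invoke a known linear-time algorithm for the maximum induced matching in a tree (a rooted edge-DP of Cameron-type suffices). At a leaf labeled by the complement $\Bar{T}'$ of a tree, Lemma~\ref{complement of tree} tells us that $L(\Bar{T}')^2$ is a clique, so $\mathrm{im}(\Bar{T}')$ is $1$ when $\Bar{T}'$ has at least one edge and $0$ otherwise.

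At a union node with children $G_1,G_2$, any induced matching of $G_1 \oplus G_2$ splits along the two components, giving $\mathrm{im}(G_1 \oplus G_2) = \mathrm{im}(G_1) + \mathrm{im}(G_2)$. At a join node with children $G_1,G_2$, let $X$ be the cross-edge set of Lemma~\ref{join}. The heart of the proof is the claim
\[
\mathrm{im}(G_1 \otimes G_2) \;=\; \max\bigl\{\,\mathrm{im}(G_1),\;\mathrm{im}(G_2),\;1\,\bigr\},
\]
which I justify by a three-way case analysis on how an optimum induced matching $M$ meets $X$. By Lemma~\ref{join}(a), $X$ forms a clique in $L(G)^2$, so $|M\cap X|\le 1$; by Lemma~\ref{join}(b), any edge of $X$ is adjacent in $L(G)^2$ to every edge of $G_1$ and of $G_2$, so if $M\cap X\neq\es$ then $|M|=1$; by Lemma~\ref{join}(c), every edge of $G_1$ is adjacent in $L(G)^2$ to every edge of $G_2$, so if $M\cap X=\es$ then $M$ is contained entirely in one of $G_1,G_2$. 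The maximum over the three alternatives is the displayed value.

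For the running time, the decomposition tree has $O(n)$ nodes and each internal node is handled in constant time from its children, while the leaf procedures run in time linear in the sizes of their trees, which partition $V(G)$; thus the total running time is $O(n)$. The only real obstacle is the join case, and Lemma~\ref{join} has been arranged precisely so that it collapses to the short case analysis above.
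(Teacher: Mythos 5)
Your proposal is correct and follows essentially the same route as the paper: linear-time subroutine at tree leaves, the clique observation of Lemma~\ref{complement of tree} at complement-of-tree leaves, additivity at union nodes, and the formula $\max\{i\nu(G_1),i\nu(G_2),1\}$ at join nodes. Your explicit case analysis via Lemma~\ref{join} just spells out the justification the paper leaves implicit.
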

\begin{proof} In this proof we denote the cardinality of a maximum induced
matching in a graph $G$ by $i\nu(G)$.

First assume that $G$ is a tree. Since the maximum induced matching
problem can be formulated in monadic second-order logic, there
exists a linear-time algorithm to compute the cardinality of a
maximal induced matching in $G$ \cite{kn:brandstadt,kn:fricke}.

\smallskip

Assume that $G$ is the complement of a tree. By
Lemma~\ref{complement of tree} $L(G)^2$ is a clique. Thus the
cardinality of a maximum induced matching in $G$ is one if $G$ has a
nonedge and otherwise it is zero.

\smallskip

Assume that $G$ is the union of two tree-cographs $G_1$ and
$G_2$. Then
\[i\nu(G) = i\nu(G_1)+i\nu(G_2).\]

\smallskip

Assume that $G$ is the join of two tree-cographs
$G_1$ and $G_2$. Then
\[i\nu(G)= \max\;\{\;i\nu(G_1), \;i\nu(G_2), \;1\;\}.\]

This proves the theorem. \qed\end{proof}

\section{Permutation graphs}
%%%%%%%%%%%%%%%%%%%%%%%%%%%%

A permutation diagram on $n$ points is obtained as follows.
Consider two horizontal lines $L_1$ and $L_2$ in the Euclidean plane.
For each line $L_i$ consider a linear ordering $\prec_i$ of
$\{1,\ldots,n\}$ and put points $1,\ldots,n$ on $L_i$ in this order.
For $k=1,\ldots,n$ connect the two points with the label $k$ by a
straight line segment.

\begin{definition}[\cite{kn:golumbic}]
A graph $G$ is a permutation graph if it is the intersection
graph of the line segments in a permutation diagram.
\end{definition}

\begin{figure}
\setlength{\unitlength}{1.8pt}
\begin{center}
\begin{picture}(150,45)
\thicklines
\put(10,20){\circle*{2.0}}
\put(20,10){\circle*{2.0}}
\put(20,30){\circle*{2.0}}
\put(40,10){\circle*{2.0}}
\put(40,30){\circle*{2.0}}
\put(10,20){\line(1,-1){10}}
\put(10,20){\line(1,1){10}}
\put(20,10){\line(1,0){20}}
\put(20,10){\line(0,1){20}}
\put(40,10){\line(0,1){20}}
\put(20,30){\line(1,0){20}}

\put(5,19){$4$}
\put(19,4){$2$}
\put(39,4){$3$}
\put(19,32){$5$}
\put(39,32){$1$}

\put(70,10){\circle*{1.8}}
\put(90,10){\circle*{1.8}}
\put(110,10){\circle*{1.8}}
\put(130,10){\circle*{1.8}}
\put(150,10){\circle*{1.8}}
\put(70,30){\circle*{1.8}}
\put(90,30){\circle*{1.8}}
\put(110,30){\circle*{1.8}}
\put(130,30){\circle*{1.8}}
\put(150,30){\circle*{1.8}}

\put(70,30){\line(2,-1){40}}
\put(90,30){\line(3,-1){60}}
\put(110,30){\line(-2,-1){40}}
\put(130,30){\line(0,-1){20}}
\put(150,30){\line(-3,-1){60}}
\put(69,4){$3$}
\put(89,4){$5$}
\put(109,4){$1$}
\put(129,4){$4$}
\put(149,4){$2$}
\put(69,32){$1$}
\put(89,32){$2$}
\put(109,32){$3$}
\put(129,32){$4$}
\put(149,32){$5$}
\end{picture}
\end{center}
\caption{A permutation graph and a permutation diagram}
\end{figure}
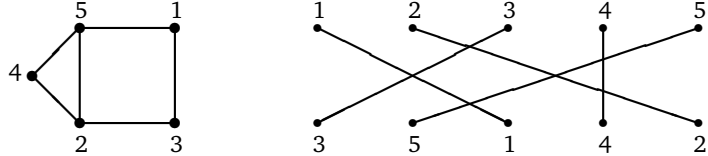

\medskip

Consider two horizontal lines $L_1$ and $L_2$ and on each line
$L_i$ choose $n$ intervals. Connect the left - and right endpoint
of the $k^{\mathrm{th}}$ interval on $L_1$ with the left - and
right endpoint of the $k^{\mathrm{th}}$ interval on $L_2$.
Thus we obtain a collection of $n$ trapezoids. We call this
a trapezoid diagram.

\begin{definition}
A graph is a trapezoid graph if it is the intersection
graph of a collection of trapezoids in a trapezoid diagram.
\end{definition}

\begin{lemma}
\label{permutation}
If $G$ is a permutation graph then $L(G)^2$ is a trapezoid graph.
\end{lemma}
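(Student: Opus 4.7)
The plan is to build a trapezoid diagram for $L(G)^2$ directly from a permutation diagram for $G$. Fix a permutation diagram of $G$ on horizontal lines $L_1$ and $L_2$, and for each vertex $v \in V(G)$ let $t_v \in L_1$ and $b_v \in L_2$ denote the two endpoints of its segment $\ell_v$. For each edge $e = \{u,v\} \in E(G)$ the segments $\ell_u$ and $\ell_v$ cross, so $t_u,t_v$ and $b_u,b_v$ occur in opposite orders on their respective lines; I assign to $e$ the trapezoid $T(e)$ whose top interval on $L_1$ is bounded by $t_u$ and $t_v$ and whose bottom interval on $L_2$ is bounded by $b_u$ and $b_v$. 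The geometric fact that drives everything is that $\ell_u$ and $\ell_v$ are exactly the two diagonals of $T(e)$, and in particular both lie inside $T(e)$.

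It then remains to verify that, for any two edges $e,e' \in E(G)$, the trapezoids $T(e)$ and $T(e')$ meet if and only if $e,e'$ are adjacent in $L(G)^2$. The ``only if'' direction is the easier half: when $e$ and $e'$ share a vertex $u$ the point $t_u$ lies in both trapezoids, and otherwise adjacency in $L(G)^2$ supplies an edge $\{u,x\} \in E(G)$ with $u \in e$ and $x \in e'$, whose crossing point in the permutation diagram sits simultaneously on the diagonal $\ell_u$ of $T(e)$ and on the diagonal $\ell_x$ of $T(e')$.

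For the converse I would split into cases according to whether the top intervals of $T(e)$ and $T(e')$ overlap, whether the bottom intervals overlap, or whether both pairs are disjoint. If $e \cap e' = \es$ and the top intervals overlap, then some $t_x$ lies strictly between $t_u$ and $t_v$; since $b_u,b_v$ are in the reverse order, wherever $b_x$ sits on $L_2$ it forces a crossing of $\ell_x$ with at least one of $\ell_u,\ell_v$, yielding the desired edge of $G$ between endpoints of $e$ and $e'$. The case of overlapping bottom intervals is symmetric.

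The most delicate case is when both pairs of intervals are disjoint and yet the trapezoids still intersect: this can only happen when one trapezoid slants from top-left to bottom-right while the other slants in the opposite direction, and a short comparison of coordinates then shows that the two closer segments, one from $e$ and one from $e'$, must appear in reverse order on $L_1$ versus $L_2$ and therefore cross. I expect this last case to be the main obstacle, since it is the only place where the trapezoidal geometry -- rather than straightforward interval overlap -- does genuine work; the remaining verifications are essentially bookkeeping.
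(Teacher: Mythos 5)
Your construction is exactly the one in the paper: the two crossing line segments of each edge become the diagonals of a trapezoid on the four endpoints, and adjacency in $L(G)^2$ corresponds to intersection of these trapezoids. The only difference is that the paper delegates the verification to Proposition~1 of~\cite{kn:cameron2}, whereas you carry it out directly, and your case analysis (shared endpoint; overlapping top or bottom intervals forcing a crossing with one of the two diagonals; both interval pairs disjoint, where intersection forces the left-to-right orders to reverse between the two lines and hence the two inner segments to cross) is correct.
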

\begin{proof} Consider a permutation diagram for $G$. Each edge of $G$
corresponds to two intersecting line segments in the diagram. The
four endpoints of a pair of intersecting line segments define a
trapezoid. Two vertices in $L(G)^2$ are adjacent exactly when the
corresponding trapezoids intersect (see Proposition~1
in~\cite{kn:cameron2}). \qed\end{proof}

\begin{theorem}
There exists an $O(n^4)$ algorithm that computes a
strong edge coloring in permutation graphs.
\end{theorem}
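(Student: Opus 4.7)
The plan is to reduce strong edge coloring of $G$ to vertex coloring of $L(G)^2$ and exploit the fact that, by Lemma~\ref{permutation}, $L(G)^2$ is a trapezoid graph. Starting from a permutation diagram of $G$, I would first build a trapezoid diagram for $L(G)^2$ explicitly: for each edge $\{u,v\}$ of $G$ the two line segments representing $u$ and $v$ cross, and their four endpoints determine a trapezoid $T_{uv}$. Since $|E(G)| \leq \binom{n}{2} = O(n^2)$, this produces $N = O(n^2)$ trapezoids and can be done in $O(n^2)$ time without ever constructing $L(G)^2$ as an abstract graph.

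Next, I would apply a polynomial-time optimal coloring algorithm for trapezoid graphs. Trapezoid graphs are cocomparability graphs and hence perfect, so $\chi(L(G)^2) = \omega(L(G)^2) = s\chi^{\prime}(G)$, and an optimal coloring can be produced by sweeping a vertical line across the trapezoid diagram and assigning to each trapezoid, at the moment its left boundary is reached, the smallest color not used by any currently active trapezoid. A straightforward implementation of this sweep runs in $O(N^2)$ time on $N$ trapezoids; with $N = O(n^2)$ this gives the claimed $O(n^4)$ bound.

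Finally, assigning to each edge $\{u,v\}$ of $G$ the color of its trapezoid $T_{uv}$ yields a strong edge coloring of $G$ by the definition of $L(G)^2$, and it is optimal by perfection of $L(G)^2$. The main obstacle in this plan is really a bookkeeping one: one has to translate the permutation diagram directly into a trapezoid diagram so that the coloring subroutine never needs to touch $L(G)^2$ as an abstract graph (whose adjacency representation alone could be as large as $\Theta(n^4)$), and one has to choose a coloring routine whose running time is genuinely $O(N^2)$ rather than polynomial in the size of that abstract graph. Since the output itself has size $\Theta(n^2)$ in the worst case, any improvement below $O(n^4)$ would require a coloring procedure that exploits the additional structure of trapezoids arising from segment crossings in a permutation diagram, rather than treating $L(G)^2$ as a black-box cocomparability graph.
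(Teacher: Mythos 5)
Your proposal takes essentially the same route as the paper: both reduce the problem to optimally coloring the trapezoid graph $L(G)^2$ given by Lemma~\ref{permutation}, building one trapezoid per crossing pair of segments ($O(n^2)$ of them) and invoking the greedy coloring algorithm of Dagan, Golumbic and Pinter for trapezoid graphs to obtain the $O(n^4)$ bound. The one caution is that a plain first-fit sweep by left endpoint is not automatically optimal for trapezoid graphs (unlike interval graphs), so the coloring step should be justified as, or replaced by, the specific greedy procedure of Dagan et al.\ that the paper cites.
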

\begin{proof} Dagan, {\em et al.\/},~\cite{kn:dagan} show that a trapezoid
graph can be colored by a greedy coloring algorithm. It is easy to
see that this algorithm can be adapted so that it finds a strong
edge-coloring in permutation graphs. \qed\end{proof}

\begin{remark}
A somewhat faster coloring algorithm for trapezoid graphs
appears in~\cite{kn:felsner}. Their algorithm runs in
$O(n \log n)$ time where $n$ is the
number of vertices in the trapezoid graph.
An adaption of their algorithm yields a
strong edge coloring for permutation graphs that runs
in $O(m \log n)$ time, where $n$ and $m$ are the number of vertices
and edges in the permutation graph.
\end{remark}

\subsection{Bipartite permutation graphs}
%%%%%%%%%%%%%%%%%%%%%%%%%%%%%%%%%%%%%%%%%

A graph is a {\em bipartite permutation graph} if it is not only a
bipartite graph but also a permutation graph~\cite{kn:spinrad}. Let
$G=(A,B,E)$ be a bipartite permutation graph with color classes $A$
and $B$.

\begin{lemma}
\label{bip perm}
Let $G$ be a bipartite permutation graph. Then $L(G)^2$
is an interval graph.
\end{lemma}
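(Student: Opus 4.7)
The plan is to construct an explicit interval representation of $L(G)^2$ from the strong ordering of a bipartite permutation graph. By the characterization of Spinrad, Brandst\"adt and Stewart I may order the colour classes as $A=\{a_1,\dots,a_p\}$ and $B=\{b_1,\dots,b_q\}$ so that each neighbourhood is a contiguous interval, $N(a_i)=\{b_{l_i},\dots,b_{r_i}\}$ and $N(b_j)=\{a_{l'_j},\dots,a_{r'_j}\}$, with the bounds $l_i,r_i,l'_j,r'_j$ monotone non-decreasing. Since $G$ is bipartite, two edges $e=a_ib_j$ and $e'=a_kb_l$ are adjacent in $L(G)^2$ if and only if one of the cross-edges $a_ib_l$ or $a_kb_j$ lies in $E(G)$, which translates to the purely combinatorial condition $l\in[l_i,r_i]$ or $k\in[l'_j,r'_j]$; this already absorbs the ``share-a-vertex'' cases $i=k$ and $j=l$.

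I plan to enumerate the edges of $G$ in a linear order and, for each $e\in E$, define $I_e$ to be the range of positions of its $L(G)^2$-neighbours. In the lexicographic order on $(i,j)$, the $L(G)^2$-neighbours of $e=a_ib_j$ consist of the edges $a_kb_l$ with $k\in[l'_j,r'_j]$---which form a contiguous block, because these are precisely the edges whose $a$-endpoint index lies in a consecutive range of rows and within each row every edge appears---together with the edges $a_kb_l$ with $l\in[l_i,r_i]$, which a priori can occur across many rows. The forward direction---adjacent edges have overlapping intervals---is then immediate by construction.

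The main obstacle is the converse: showing that overlapping $I_e$ and $I_{e'}$ force a direct $L(G)^2$-adjacency. The second contribution above is not inherently contiguous in lex order, so a more subtle enumeration or a secondary sort is likely needed; the monotonicity of $l_i,r_i$ ought to guarantee that these ranges fit together into a single contiguous block once the order is chosen carefully. Should the direct route prove technically delicate, a back-up plan is to verify instead that $L(G)^2$ is chordal and asteroidal-triple-free, and then invoke the Lekkerkerker--Boland theorem: chordality reduces, via Theorem~\ref{weakly chordal} applied with $k=5$ together with a separate argument eliminating induced $4$-cycles in $L(G)^2$, to a finite case analysis ruled out by strong-ordering monotonicity, and AT-freeness follows from the essentially one-dimensional nature of the same ordering.
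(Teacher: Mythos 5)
Your primary plan does not close, and you say so yourself. Setting $I_e$ to be the convex hull of the positions of the $L(G)^2$-neighbours of $e$ in some linear enumeration of $E(G)$ gives only the easy direction (adjacent edges have overlapping intervals); to get an interval representation you need an enumeration in which every closed neighbourhood of $L(G)^2$ is consecutive, and the existence of such an enumeration is essentially equivalent to $L(G)^2$ being an interval graph. The ``main obstacle'' you flag -- that the edges $a_kb_l$ with $l\in[l_i,r_i]$ need not form a contiguous block and that overlap need not force adjacency -- is therefore not a technical loose end but the entire content of the lemma, and no argument is offered for it.

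Your back-up route is the route the paper actually takes (chordality plus AT-freeness plus Lekkerkerker--Boland; AT-freeness of $L(G)^2$ is the cited result of Cameron and of Chang applied to the AT-free permutation graph $G$), but there too the load-bearing step is missing. Theorem~\ref{weakly chordal} with $k=5$ only excludes induced cycles of length at least five from $L(G)^2$, so chordality reduces to showing that $L(G)^2$ has no induced $C_4$, and you leave that as ``a finite case analysis ruled out by strong-ordering monotonicity'' without performing it. This exclusion cannot be soft: Figure~\ref{counterexample} exhibits a chordal bipartite graph $G$ for which $L(G)^2$ contains an induced $C_4$, so any correct argument must genuinely exploit the permutation structure, not just bipartiteness and the absence of long cycles. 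The paper does this by viewing $L(G)^2$ as a trapezoid graph (Lemma~\ref{permutation}), taking a minimal separator $S$, locating two consecutive components of $L(G)^2-S$ that every trapezoid of $S$ meets, and showing that two non-adjacent trapezoids in $S$ would force a triangle in the bipartite graph $G$; hence every minimal separator is a clique and $L(G)^2$ is chordal by Dirac's criterion. Until you supply an argument of comparable substance for the $C_4$-freeness (or for the consecutive arrangement in your primary plan), the proof has a genuine gap.
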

\begin{proof} We first show that $L(G)^2$ is chordal. We may assume that
$L(G)^2$ is connected.

\smallskip

Let $x$ and $y$ be two non-adjacent vertices in a graph $H$.
An $x,y$-separator is a set $S$ of vertices which
separates $x$ and $y$ in distinct components. The separator
is a minimal $x,y$-separator if no proper subset of
$S$ separates $x$ and $y$.
A set $S$ is a
minimal separator if there exist non-adjacent vertices $x$ and
$y$ such that $S$ is a minimal $x,y$-separator.
Recall that Dirac characterizes chordal graphs by the property
that every minimal separator is a clique~\cite{kn:dirac}.

\smallskip

Consider the trapezoid diagram. Let $S$ be a minimal separator
in the trapezoid graph $L(G)^2$ and consider removing the trapezoids
that are in $S$ from the diagram. Every component of
$L(G)^2-S$ is a connected part in the diagram. Consider the
left-to-right ordering of the components in the diagram.
Since $S$ is a minimal separator
there must exist two {\em consecutive\/} components $C_1$ and $C_2$
such that every vertex of $S$ has a neighbor in both $C_1$
and $C_2$~\cite{kn:bodlaender}.

\smallskip

Assume that $S$ has two non-adjacent trapezoids $t_1$ and $t_2$.
Each of $t_i$ is characterized by two crossing line segments
$\{a_i,b_i\}$ of
the permutation diagram. Since $t_1$ and $t_2$ are not adjacent,
any pair of line-segments with one element in $\{a_1,b_1\}$ and the
other element in $\{a_2,b_2\}$ are parallel.

\smallskip

Each trapezoid $t_i$ intersects each component $C_1$ and $C_2$.
Since pairs of line-segments are parallel, we have that,
for some $i \in \{1,2\}$
\begin{enumerate}[\rm (1)]
\item $N_G(a_i) \cap C_1 \subseteq N_G(a_{3-i}) \cap C_1$,
\item $N_G(a_i) \cap C_1 \subseteq N_G(b_{3-i}) \cap C_1$,
\item $N_G(b_i) \cap C_1 \subseteq N_G(a_{3-i}) \cap C_1$ and
\item $N_G(b_i) \cap C_1 \subseteq N_G(b_{3-i}) \cap C_1$,
\end{enumerate}
and the reverse inequalities hold for $C_2$.
Each trapezoid $t_i$ has at least one line segment
of $\{a_i,b_i\}$ intersecting with a line segment of $C_i$.
By the neighborhood containments this implies that
$G$ has a triangle, which contradicts that $G$ is bipartite.
This proves that $S$ is a clique and by Dirac's characterization
$L(G)^2$ is chordal.

\smallskip

Lekkerkerker and Boland prove in~\cite{kn:lekkerkerker} that a graph
$H$ is an interval graph if and only if $H$ is chordal and $H$ has
no asteroidal triple. It is easy to see that a permutation graph has
no asteroidal triple~\cite{kn:kloks}. Cameron proves
in~\cite{kn:cameron2} (and independently Chang proves
in~\cite{kn:chang}) that $L(H)^2$ is AT-free whenever a graph $H$ is
AT-free. Thus, since $L(G)^2$ is chordal and AT-free, $L(G)^2$ is an
interval graph.

This proves the lemma. \qed\end{proof}

Chang proves in~\cite{kn:chang} that there exists a linear-time
algorithm that computes a maximum induced matching in bipartite
permutation graphs. We show that there is a simple linear-time
algorithm that computes the strong chromatic index of bipartite
permutation graphs.

\begin{theorem}
There exists a linear-time algorithm that computes the strong
chromatic index of bipartite permutation graphs.
\end{theorem}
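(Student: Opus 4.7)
The plan is to reduce the problem to computing $\omega(L(G)^2)$ and then to exploit the combinatorial structure of bipartite permutation graphs. By Lemma~\ref{bip perm}, $L(G)^2$ is an interval graph, hence chordal and so perfect. Therefore
\[s\chi'(G) \;=\; \chi(L(G)^2) \;=\; \omega(L(G)^2),\]
and it suffices to compute the clique number of $L(G)^2$ in linear time.

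To do this I would first compute a strong ordering of the bipartite permutation graph $G=(A,B,E)$, i.e.\ orderings $a_1 \prec \cdots \prec a_p$ of $A$ and $b_1 \prec \cdots \prec b_q$ of $B$ in which every neighborhood $N(a_i)$ and $N(b_j)$ is a consecutive block in the opposite colour class. Such an ordering is computable in $O(n+m)$ time by a standard BFS-layering procedure for bipartite permutation graphs. In this ordering the left endpoint $\ell(v)$ and right endpoint $r(v)$ of each neighborhood are available in $O(1)$ per vertex.

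From the strong ordering I would derive an implicit interval representation of $L(G)^2$: each edge $e=(a_i,b_j)$ of $G$ is assigned an interval whose endpoints are determined by $\ell(a_i),r(a_i),\ell(b_j),r(b_j)$ and by the position of $e$ in the strong ordering. A standard left-to-right sweep over these interval endpoints then reports the maximum depth, which equals $\omega(L(G)^2)$. Equivalently, for each edge $e=(a,b)$ one computes a canonical clique $K_e$ of $L(G)^2$ containing $e$, whose size is a closed-form expression in $d(a),d(b)$ and the interval endpoints of $N(a),N(b)$, and returns $\max_e|K_e|$; a charging argument based on consecutiveness shows that this quantity equals the clique number of $L(G)^2$. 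Either way the total work is $O(n+m)$.

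The main obstacle is producing the interval representation of $L(G)^2$ (equivalently, the canonical cliques $K_e$) \emph{directly} from the strong ordering, without first materialising $L(G)^2$, which already has $m$ vertices and potentially $\Theta(m^2)$ edges. The proof of Lemma~\ref{bip perm} is indirect—it proceeds through minimal separators, asteroidal triples and the trapezoid representation—so some additional geometric bookkeeping is needed to read off the interval endpoints in the strong ordering. Once this is in place, the fact that a maximum clique in an interval graph is attained at a point of maximum depth, and that this point must fall inside the interval of some edge $e$, confirms that the sweep (or the edge-indexed maximum $\max_e|K_e|$) yields $\omega(L(G)^2)=s\chi'(G)$.
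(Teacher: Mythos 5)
Your overall strategy coincides with the paper's: both reduce the problem to computing $\omega(L(G)^2)$ using the fact that $L(G)^2$ is perfect (you invoke Lemma~\ref{bip perm} to get an interval graph; the paper uses the same perfection argument), and both then try to read the maximum clique off the left-to-right structure of the permutation diagram. The difference is that the paper actually supplies the structural ingredient that you explicitly leave open. You write that ``the main obstacle is producing the interval representation of $L(G)^2$ (equivalently, the canonical cliques $K_e$) directly from the strong ordering'' and that ``some additional geometric bookkeeping is needed'' --- but that bookkeeping \emph{is} the proof. Without an explicit description of the maximal cliques of $L(G)^2$ in terms of the ordering, neither the correctness of the sweep (why does every maximal clique of $L(G)^2$ arise as some $K_e$, and why is $|K_e|$ a closed form in degrees and neighborhood endpoints?) nor the linear running time is established; the ``charging argument based on consecutiveness'' is asserted, not given.

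The paper closes exactly this gap: it shows that the edge $\{a_1,b_1\}$ formed by the leftmost vertices of the diagram is a simplicial vertex of $L(G)^2$, that the unique maximal clique of $L(G)^2$ containing it is the extension $\Bar{M}$ of the complete bipartite subgraph induced by $N[a_1]\cup N[b_1]$ (all edges of that complete bipartite subgraph plus all edges with an endpoint in $N(a_1)\cup N(b_1)$), and that \emph{all} maximal cliques of $L(G)^2$ are enumerated by repeating this for $A\setminus\{a_1\}\cup B$, for $A\cup B\setminus\{b_1\}$, and then recursing after deleting both $a_1$ and $b_1$. Tracking only the cardinalities of these cliques gives the linear-time bound. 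If you replace your placeholder ``geometric bookkeeping'' with this concrete clique description (or derive the equivalent interval endpoints for each edge and prove they realize $L(G)^2$), your argument becomes essentially the paper's proof; as written, the central step is missing.
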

\begin{proof} Let $G=(A,B,E)$ be a bipartite permutation graph and consider a
permutation diagram for $G$. Let $[a_1,\ldots,a_s]$ and
$[b_1,\ldots,b_t]$ be left-to-right orderings of the vertices of $A$
and $B$ on the topline of the diagram. Assume that $a_1$ is the
left-most endpoint of a line segment on the topline. We may assume
that the line segment of $a_1$ intersects the line segment of $b_1$.

\smallskip

Consider the set of edges in the maximal
complete bipartite subgraph $M$ in $G$
that consists of the following vertices.

\begin{enumerate}[\rm (a)]
\item $M$ contains $a_1$ and $b_1$,
\item $M$ contains all the vertices of $A$ of which the endpoint
on the topline is to the left of $b_1$,
\item $M$ contains all the vertices of $B$ of which the
endpoint on the bottom line is to the left of $a_1$.
\end{enumerate}

Notice that
$M$ is the set of edges in the complete bipartite subgraph in $G$
induced by
\[N[a_1] \cup N[b_1].\]
Extend the set of edges in $M$ with the edges in $G$ that have
one endpoint in $M$.
Call the set of edges in $M$ plus
the edges with one endpoint in $N(a_1) \cup N(b_1)$
the extension $\Bar{M}$ of $M$.
That is,
\[\Bar{M}=\{\;\{p,q\}\in E\;|\; p \in M \quad\text{or}\quad q \in M\;\}.\]
Notice that $\Bar{M}$ is the unique maximal clique that
contains the simplicial edge $\{a_1,b_1\}$ in $L(G)^2$.

\smallskip

The second maximal clique in $L(G)^2$ is found by
the process described above for the line segments induced
by $A \setminus \{a_1\} \cup B$.
Likewise, the third maximal clique in $L(G)^2$ is found by repeating
the process for the line segments induced by
$A \cup B \setminus \{b_1\}$.

\smallskip

Next, remove the vertices $a_1$ {\em and\/} $b_1$ and repeat the
three steps described above. It is easy to see that the list
obtained in this manner contains all the maximal cliques of
$L(G)^2$. Notice also that this algorithm can be implemented to run
in linear time. Since $L(G)^2$ is perfect the chromatic number is
equal to the clique number, so it suffices to keep track of the
cardinalities of the maximal cliques that are found in the process
described above. \qed\end{proof}

\section{Chordal bipartite graphs}
%%%%%%%%%%%%%%%%%%%%%%%%%%%%%%%%%%

\begin{definition}[\cite{kn:golumbic2,kn:huang}]
A bipartite graph is chordal bipartite if it has no
induced cycles of length more than four.
\end{definition}

In contrast to bipartite permutation graphs,
$L(G)^2$ is not necessarily chordal when $G$ is chordal bipartite.
An example to the contrary is shown in Figure~\ref{counterexample}.
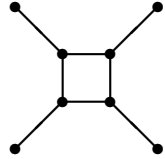
\begin{figure}
\setlength{\unitlength}{1.8pt}
\begin{center}
\begin{picture}(30,30)
\thicklines
\put(0,0){\circle*{2.0}}
\put(30,0){\circle*{2.0}}
\put(10,10){\circle*{2.0}}
\put(20,10){\circle*{2.0}}
\put(10,20){\circle*{2.0}}
\put(20,20){\circle*{2.0}}
\put(0,30){\circle*{2.0}}
\put(30,30){\circle*{2.0}}
\put(0,0){\line(1,1){10}}
\put(10,10){\line(1,0){10}}
\put(10,10){\line(0,1){10}}
\put(20,10){\line(1,-1){10}}
\put(20,10){\line(0,1){10}}
\put(10,20){\line(1,0){10}}
\put(10,20){\line(-1,1){10}}
\put(20,20){\line(1,1){10}}
\end{picture}
\caption{A chordal bipartite graph $G$ for
which $L(G)^2$ is not chordal.}
\label{counterexample}
\end{center}
\end{figure}

A graph is {\em weakly chordal} if it has no induced cycle of length
more than four or the complement of such a cycle~\cite{kn:hayward2}.
Weakly chordal graphs are perfect. Notice that chordal bipartite
graphs are weakly chordal.

Cameron, Sritharan and Tang prove in~\cite{kn:cameron3} (and
independently Chang proves in~\cite{kn:chang}) that $L(G)^2$ is
weakly chordal whenever $G$ is weakly chordal. Thus, if $G$ is
chordal bipartite then $L(G)^2$ is perfect and so, in order to
compute the strong chromatic index of $G$ it is sufficient to
compute the clique number in $L(G)^2$ (see also~\cite{kn:abueida}).

It is well-known that the clique number of
a perfect graph
can be computed in polynomial time~\cite{kn:grotschel}.\footnote{Actually,
this paper shows that for any graph $G$ with $\omega(G)=\chi(G)$
the values of these parameters can be determined in polynomial time.
The reason is that Lov\'asz' bound $\vartheta(G)$ for the Shannon capacity
of a graph can be computed in polynomial time
for all graphs, {\em via\/} the ellipsoid method, and
the parameter $\vartheta(G)$ is sandwiched between
$\omega(G)$ and $\chi(G)$.}
The algorithm presented in~\cite{kn:hayward} to compute the
clique number of weakly chordal graphs
runs in $O(n^3)$ time, where $n$ is the number
of vertices in the graph.

A direct application of their algorithm to solve the
strong chromatic index of chordal bipartite graphs $G$
involves computing the graph $L(G)^2$. This graph has $m$ vertices,
where $m$ is the number of edges in $G$. This gives a timebound
$O(n^6)$ for computing the strong chromatic index of a chordal
bipartite graph (see also~\cite{kn:abueida}).

In this section we show
that there is a more efficient method.

\begin{definition}[\cite{kn:yannakakis}]
A bipartite graph $G=(A,B,E)$ is a chain graph if there exists an
ordering $a_1, a_2, \ldots, a_{|A|}$ of the vertices in $A$ such
that $N(a_1) \subseteq N(a_2) \subseteq \cdots \subseteq
N(a_{|A|})$.
\end{definition}

Chain graphs are sometimes called {\em difference
graphs}~\cite{kn:hammer}. Equivalently, a graph $G=(V,E)$ is a chain
graph if there exists a positive real number $T$ and a real number
$w(x)$ for every vertex $x \in V$ such that $|w(x)| < T$ for every
$x \in V$ and such that, for any pair of vertices $x$ and $y$,
$\{x,y\} \in E$ if and only if $|w(x)-w(y)| \geqslant T$ $($see
Figure~\ref{fig difference} for an example$)$.

\begin{figure}[htb]
\begin{center}
\includegraphics[scale=1.0]{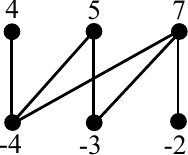}
\caption{\label{fig difference}A difference graph with $T=8$.}
\end{center}
\end{figure}

Chain graphs can be characterized in many ways~\cite{kn:hammer}. For
example, a graph is a chain graph if and only if it has no induced
$K_3$, $2K_2$ or $C_5$~\cite[Proposition~2.6]{kn:hammer}. Thus a
bipartite graph is a chain graph if it has no induced $2K_2$.
Abueida, {\em et al.\/}, prove that, if $G$ is a bipartite graph
that does not contain an induced $C_6$, then a maximal clique in
$L(G)^2$ is a maximal chain subgraph of $G$~\cite{kn:abueida}.
Notice that $C_6$ has three consecutive edges that form a clique in
$L(G)^2$, however, these edges do not form a chain subgraph.

\bigskip

Thus, computing the clique number of $L(G)^2$ for $C_6$-free
bipartite graphs $G$ is equivalent to finding the maximal number of
edges that form a chain graph in $G$. In~\cite{kn:abueida} the
authors prove that, if $G$ is a $C_6$-free bipartite graph, then
\[\chi(G^{\ast})=ch(G),\]
where $G^{\ast}$ is the complement of $L(G)^2$ and $ch(G)$ is the
minimum number of chain subgraphs of $G$ that cover the edges of
$G$.

\bigskip

An {\em antimatching} in a graph $G$ is a collection of edges which
forms a clique in $L(G)^2$~\cite{kn:mahdian}. It is easy to see that
finding a chain subgraph with the maximal number of edges in general
graphs is NP-complete. Mahdian mentions in his paper that the
complexity of maximum antimatching in simple bipartite graphs is
open~\cite{kn:mahdian}.

\begin{lemma}
Let $G$ be a bipartite graph. Finding a maximum set of edges that
form a chain subgraph of $G$ is NP-complete.
\end{lemma}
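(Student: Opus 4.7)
The plan is to establish the lemma in two parts: membership in NP and NP-hardness. Membership is routine: a certificate is a candidate edge set $F \subseteq E(G)$, and verification checks in $O(|F|^2)$ time that the bipartite graph $(A,B,F)$ contains no induced $2K_2$, by inspecting every pair of vertex-disjoint edges of $F$ and confirming that at least one of the two cross-edges is present in $F$.

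For NP-hardness I would reduce from a well-studied bipartite optimization problem, with Maximum Edge Biclique (MEB, shown NP-complete by Peeters) as my first target. The naive choice $G':=G$ fails because although every biclique is a chain graph, a chain graph can be much larger than any biclique contained in it; for example, a $P_4$ is a chain subgraph but not complete bipartite. My plan is therefore to build, from an MEB instance $(G,k)$, a bipartite graph $G'$ by attaching to each vertex of $G$ a private bundle of twin or pendant vertices whose sole role is to penalise chain orderings that do not collapse into a biclique, and to tune a threshold $k'$ so that chain subgraphs of $G'$ with $\ge k'$ edges correspond exactly to bicliques of $G$ with $\ge k$ edges.

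The hard step will be ruling out \emph{mixed} chain subgraphs of $G'$ that combine some biclique-style edges with staircase-shaped edges drawn from different parts of the gadget. I would attack this by an exchange argument: given an optimal chain subgraph $H$ with nested ordering $N(a_1)\subseteq\cdots\subseteq N(a_s)$, locate the largest biclique $S\times T\subseteq E(H)$ and use the gadget's twin structure to show that every edge of $E(H)\setminus (S\times T)$ can either be swapped for an edge of $S\times T$ or costs a pendant, so a genuine staircase never beats the biclique it contains. Should this exchange argument prove too fragile, a natural fallback is to reduce from Maximum Balanced Biclique instead, or to encode CLIQUE through the vertex-edge incidence bipartite graph with thresholds forcing the chain subgraph to mirror the clique's induced substructure.
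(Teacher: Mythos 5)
Your NP-membership argument is fine, but the hardness half of your proposal is not a proof: it is a plan for a proof. You never construct the gadget (``a private bundle of twin or pendant vertices''), never specify the threshold $k'$, and never carry out the exchange argument that you yourself identify as ``the hard step'' --- indeed you explicitly hedge that it may ``prove too fragile'' and offer two alternative starting points. The obstacle you correctly diagnose (a chain subgraph such as a long staircase can have far more edges than any biclique it contains, so Maximum Edge Biclique does not reduce to this problem in any obvious way) is precisely the part that is left unresolved, so the reduction as written establishes nothing. As it stands the argument has a genuine gap at its core.

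For comparison, the paper avoids gadgets entirely. It uses two structural facts: (i) Yannakakis' proof that minimum fill-in is NP-complete proceeds by first showing that \emph{minimum chain completion} of a bipartite graph (adding the fewest edges to make it a chain graph) is NP-complete, via the observation that $G$ is a chain graph if and only if the graph $C(G)$ obtained by turning both color classes into cliques is chordal; and (ii) the class of chain graphs is closed under bipartite complementation (reversing the nesting order of neighborhoods). Hence adding a minimum set of edges to make $G$ a chain graph is the same as deleting a minimum set of edges from the bipartite complement $G'$ so that what remains is a chain graph, i.e.\ the same as finding a maximum chain subgraph of $G'$. If you want to salvage your write-up, the cleanest fix is to abandon the biclique reduction and reduce from chain completion in exactly this way; otherwise you must actually build and verify the gadget you only describe.
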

\begin{proof} Let $G=(A,B,E)$ be a bipartite graph. Let $C(G)$ be the graph
obtained from $G$ by making cliques of $A$ and $B$. Notice that $G$
is a chain graph if and only if $C(G)$ is chordal. Yannakakis shows
in~\cite{kn:yannakakis} that adding a minimum set of edges to $C(G)$
such that this graph becomes chordal is NP-complete.

\medskip

\noindent Consider the bipartite complement $G^{\prime}$ of $G$.
Adding a minimum set of edges such that $G$ becomes a chain graph is
equivalent to removing a minimum set of edges from $G^{\prime}$ such
that the remaining graph is a chain graph. This completes the proof.
\qed\end{proof}

\bigskip

In the following theorem we present our result for chordal bipartite
graphs.

\begin{theorem}
\label{chordal bip}
There exists an $O(n^4)$ algorithm that computes
the strong chromatic index of chordal bipartite graphs.
\end{theorem}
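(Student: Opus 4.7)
The plan is to reduce the computation of $s\chi^{\prime}(G)$ to a maximum chain-subgraph problem on $G$, and then to solve that problem by a per-edge $O(n^2)$ computation anchored at each candidate ``top edge''. Since $G$ is chordal bipartite it is weakly chordal and $C_6$-free, so by Cameron--Sritharan--Tang the graph $L(G)^2$ is weakly chordal, hence perfect, giving $s\chi^{\prime}(G) = \chi(L(G)^2) = \omega(L(G)^2)$. By the theorem of Abueida \emph{et al.}\ recalled above, every maximal clique of $L(G)^2$ is a maximal chain subgraph of $G$; so $s\chi^{\prime}(G)$ equals the maximum number of edges in a chain subgraph of $G$, and it suffices to compute this quantity in time $O(n^4)$.

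\medskip

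To enumerate candidates I would anchor each chain subgraph at a ``top edge''. In any chain subgraph $F$ of $G$, let $a^{\star} \in V(F) \cap A$ and $b^{\star} \in V(F) \cap B$ be the vertices of maximum $F$-degree; nestedness forces $\{a^{\star},b^{\star}\} \in E(F)$, $V(F)\cap A \subseteq N_G(b^{\star})$, and $V(F)\cap B \subseteq N_G(a^{\star})$, so every chain subgraph is parameterized by an edge of $G$. The algorithm iterates over all $m = O(n^2)$ edges $\{a,b\}$ as candidate top edges. For each, it restricts to the chordal bipartite subgraph $H = G[N_G(b), N_G(a)]$ and computes in $O(n^2)$ time the size of a largest chain subgraph of $H$ with top vertices $a$ and $b$. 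The intended tool is a doubly lexical ($\Gamma$-free) ordering of the bipartite adjacency matrix of $H$, which exists because $H$ is chordal bipartite and can be produced in $O(n^2)$ time. In that ordering the row-intersections $N_G(a') \cap N_G(a)$ for $a' \in N_G(b)$ become comparable under inclusion, so a single sweep down the matrix, summing cardinalities along the resulting staircase, yields the optimum for the current top edge. Taking the maximum over all $O(n^2)$ top edges gives $\omega(L(G)^2) = s\chi^{\prime}(G)$ in total time $O(n^4)$.

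\medskip

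The main technical hurdle is the per-edge subroutine. By the preceding lemma the analogous maximum chain-subgraph problem is NP-complete for general bipartite graphs, so the $O(n^2)$ bound per top edge must genuinely exploit the chordal bipartite structure. Verifying that, after the doubly lexical ordering, the constrained maximum chain-subgraph problem is attained by a staircase of prefixes -- so that a single greedy sweep is optimal -- is the key structural claim to establish, and I expect this to be where most of the care is required.
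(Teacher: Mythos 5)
The first half of your argument (perfection of $L(G)^2$ via weak chordality, hence $s\chi'(G)=\omega(L(G)^2)$, and the Abueida--Busch--Sritharan identification of maximal cliques of $L(G)^2$ with maximal chain subgraphs) is exactly the reduction the paper uses. The algorithmic half, however, has a genuine gap at precisely the point you flag. The claim that a doubly lexical ($\Gamma$-free) ordering of the bipartite adjacency matrix of $H=G[N_G(b),N_G(a)]$ makes the sets $N_G(a')\cap N_G(a)$, $a'\in N_G(b)$, comparable under inclusion is false: $\Gamma$-freeness does not force the rows to form a chain. Concretely, take $A=\{a,a_1,a_2\}$, $B=\{b,b_1,b_2\}$ with $a$ adjacent to all of $B$, $b$ adjacent to all of $A$, and additionally $a_1b_1,a_2b_2\in E$. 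This graph is chordal bipartite (the unique $6$-cycle has the chord $ab$), here $H=G$, yet $N(a_1)\cap N(a)=\{b,b_1\}$ and $N(a_2)\cap N(a)=\{b,b_2\}$ are incomparable, and no row/column permutation turns the matrix into a staircase because $G$ itself is not a chain graph. So the ``single sweep along the staircase'' is not defined on the $\Gamma$-free ordering, and the per-edge subroutine --- which is where all the difficulty lives, given that the unconstrained maximum chain-subgraph problem is NP-complete on general bipartite graphs --- is not established. One would have to show that the maximum chain subgraph with prescribed top edge is attained by some monotone selection readable from the $\Gamma$-free matrix, and that is not a routine verification.

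The paper avoids this by a different device: it completes the totally balanced bipartite adjacency matrix of $G$ to a \emph{maximal} totally balanced matrix (Anstee--Farber, Lehel), which admits a canonical permutation of rows and columns into a rigid block-staircase form; in that canonical form the largest triangular submatrix --- i.e.\ the largest chain subgraph of $G$ --- can be read off in linear time, and the completion itself is computable within the $O(n^4)$ budget using Paige--Tarjan or Lubiw. If you want to salvage your edge-anchored approach, you would need to either prove the missing structural claim for the constrained problem or replace the sweep with something like this completion argument restricted to $H$.
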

\begin{proof}
Let $G$ be chordal bipartite with color classes $C$ and $D$.
Consider the bipartite adjacency matrix $A$ in which rows
correspond with vertices of $C$ and columns correspond with vertices
of $D$. An entry of this matrix is one if the corresponding
vertices are adjacent and it is zero if they are not adjacent.

\medskip

\noindent
It is well-known that $G$ is chordal bipartite
if and only if $A$ is totally balanced.
Notice that a chain graph has a bipartite
adjacency matrix that is triangular. So we look for a
maximal submatrix of $A$ which is triangular after permuting
rows and columns.

\medskip

\noindent
Anstee and Farber and Lehel prove that a totally
balanced matrix, which has no repeated columns, can be completed
into a `maximal totally balanced matrix.'~\cite{kn:anstee2,kn:lehel},
If $A$ has $n$ rows
then this completing has $\binom{n+1}{2}+1$ columns.
The rows and columns of a maximal totally balanced matrix
can be permuted such that the adjacency matrix gets the following
form.

\begin{figure}[htb]
\begin{center}
\includegraphics[scale=0.5]{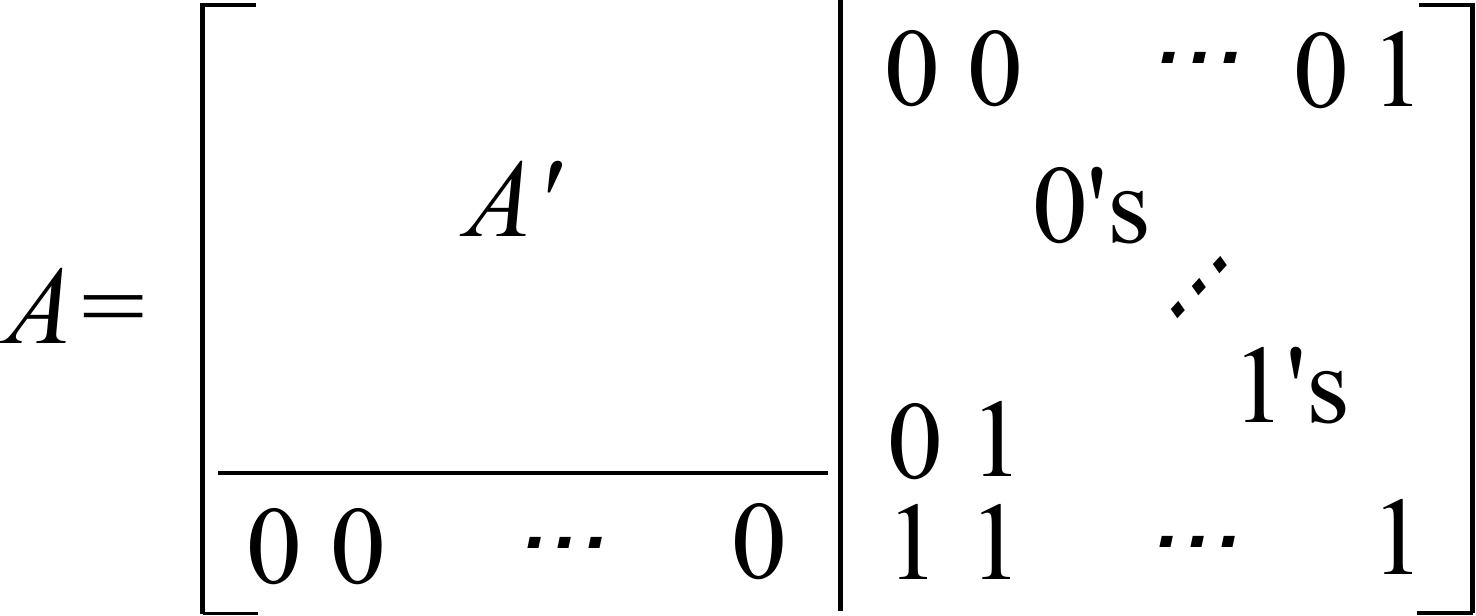}
\caption{\label{fig totallybalanced}A totally balanced matrix.}
\end{center}
\end{figure}

\medskip

\noindent
One can easily deal with repeated columns in $A$ by
giving the vertices a weight. When the matrix has the desired
form, one can easily find the maximal triangular submatrix
in linear time. Anstee and Farber give a rough bound of $O(n^5)$
to find the completion. But faster algorithms are given by
Paige and Tarjan and by Lubiw~\cite{kn:paige,kn:lubiw}.
\qed\end{proof}

\end{document}